\documentclass{amsart} 

\usepackage{amsmath,amssymb,amsthm} 
\usepackage{graphicx} 
\usepackage{enumerate}
\usepackage{hyperref} 

\newcommand{\qua}{\hskip 0.4em \ignorespaces}
\def\arxiv#1{\relax\ifhmode\unskip\qua\fi
\href{http://arxiv.org/abs/#1}%
{\tt arXiv:\penalty -100\unskip#1}}

\def\MR#1{\relax\ifhmode\unskip\qua\fi
\href{https://mathscinet.ams.org/mathscinet-getitem?mr=#1}{\tt MR#1}}
\def\ZB#1{\relax\ifhmode\unskip\qua\fi
\href{https://zbmath.org/?q=an:#1}{\tt Zbl\:#1}}
\def\xox#1{\csname xx#1\endcsname}

\renewenvironment{thebibliography}[1]{
  \begin{oldthebibliography}{ABC12}\small
    \setlength{\itemsep}{.5ex}
    \setlength{\parskip}{0em}
}
{
  \end{oldthebibliography}
}

\DeclareMathOperator{\rot}{rot}
\DeclareMathOperator{\tb}{tb}

\newcommand{\Z}{\mathbb{Z}}
\newcommand{\Q}{\mathbb{Q}}
\newcommand{\N}{\mathbb{N}}

\newcommand{\Kh}{\operatorname{Kh}}

\newtheoremstyle{thm}{}{}{\itshape}{}{\bfseries}{}{ }{} 
\newtheoremstyle{definition}{}{}{}{}{\bfseries}{}{ }{} 

\theoremstyle{thm}
\newtheorem{Theorem}{Theorem}[section]
\newtheorem{theorem}[Theorem]{Theorem}

\newtheorem{corollary}[Theorem]{Corollary}
\newtheorem{conjecture}[Theorem]{Conjecture}

\theoremstyle{definition}

\newtheorem{rem}[Theorem]{Remark}

\numberwithin{equation}{section}

\begin{document}
\title{Legendrian simple knots not detected by Khovanov homology}

\author{Chun-Sheng Hsueh}
\address{Humboldt-Universit\"at zu Berlin, Rudower Chaussee 25, 12489 Berlin, Germany}
\email{chun-sheng.hsueh@hu-berlin.de}
\email{david.suchodoll@hu-berlin.de} 

\author{Marc Kegel}
\address{Universidad de Sevilla, Dpto.\ de Álgebra,
Avda.\ Reina Mercedes s/n,
41012 Sevilla, Spain}
\email{kegelmarc87@gmail.com}

\author{David Suchodoll}

\author{Annika Thiele}
\address{École Polytechnique Fédérale de Lausanne, UPHESS, Station 8, CH-1015 Lausanne, Switzerland}
\email{annika.thiele@epfl.ch}


\date{\today} 

\begin{abstract}
In this short note, we exhibit a Legendrian simple knot that is not detected by Khovanov homology. This disproves a conjecture of Chernov and Maguire.
\end{abstract}

\keywords{Khovanov homology, Legendrian simplicity} 

\makeatletter
\@namedef{subjclassname@2020}{%
 \textup{2020} Mathematics Subject Classification}
\makeatother

\subjclass[2020]{57K10; 57K14, 53D10} 

\maketitle

\section{A composite example}

Khovanov homology detects\footnote{Here we say that a knot $K$ is \textit{detected} by its Khovanov homology if whenever there is a bigraded isomorphism between the Khovanov homologies $\Kh(K)$ and $\Kh(K')$ of $K$ and another knot $K'$, then $K'$ is isotopic to $K$. There are different versions of Khovanov homology. While the unknot detection is for $\Z$-coefficients (and thus by the universal coefficient theorem holds for all other coefficients), some of the other detection results are currently only proven for $\Q$- or $\Z_2$-coefficients or in reduced Khovanov homology.} the unknot~\cite{KM11}, the trefoils~\cite{BS22},
the figure-eight knot~\cite{BDLLS21}, and the torus knots
$T(2,\pm5)$~\cite{BHS25}; it also detects the non-fibered, twist knot
$K5a1$~\cite{BS25}. On the other hand, the unknot~\cite{EF}, the torus knots, and the figure-eight knot are Legendrian simple\footnote{A knot $K$ is \textit{Legendrian simple} if every isotopy class of a Legendrian realization of $K$ is determined by its classical invariants, the Thurston--Bennequin invariant and the rotation number.}~\cite{EH01}. 
Motivated by these and
related examples, Chernov and Maguire conjectured that every
Legendrian simple knot is detected by Khovanov homology
\cite[Conjecture~3]{CM26}. They supported the conjecture by extensive
computations among prime knots with at most $20$ crossings. In this short note, we present a counterexample to this conjecture.

\begin{theorem}\label{thm:counterexample}
Let $K$ be the connected sum of two figure-eight knots. Then $K$ is Legendrian simple, but $K$ shares the same integral Khovanov homology with infinitely many other knots.
\end{theorem}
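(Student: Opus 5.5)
Two independent facts are needed: Legendrian simplicity of $K=4_1\#4_1$, and the existence of infinitely many knots $K'$ with $\Kh(K')\cong\Kh(K)$ as bigraded groups over $\Z$.

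\emph{Legendrian simplicity.} I would use the Etnyre--Honda classification of Legendrian connected sums. Legendrian realizations of $K_1\#K_2$ correspond bijectively to pairs of Legendrian realizations of $K_1$ and $K_2$, modulo two relations. The first is the shifting of stabilizations between summands: $S_\pm(L_1)\#L_2\simeq L_1\#S_\pm(L_2)$. The second is the action of symmetries of the summands, which matters only when summands are isotopic. Since $4_1$ is Legendrian simple by~\cite{EH01}, every Legendrian $4_1$ is determined by $(\tb,\rot)$. Its mountain range is a single peak with maximal $\tb=-3$ and $\rot=0$, and all other realizations are stabilizations of this one.

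Given a Legendrian $L=L_1\#L_2$, write $L_i=S_+^{a_i}S_-^{b_i}(L_0)$, where $L_0$ is the max-tb figure eight. Using the stabilization-shifting relation, I would move all stabilizations onto one summand. This gives $L\simeq S_+^{a}S_-^{b}(L_0\#L_0)$ with $a=a_1+a_2$ and $b=b_1+b_2$. The invariants are $\tb(L)=-7-a-b$ and $\rot(L)=a-b$, so $(\tb,\rot)$ determines $(a,b)$ and hence $L$. The main subtlety is making sure the classification theorem applies in this form. In particular, the relation involving reordering or symmetries of the summands should not introduce extra classes. Since the pair classes here collapse to $(a,b)$, this is harmless.

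\emph{Khovanov homology.} The cleanest mechanism I know is mutation invariance for Khovanov homology over $\Z$ (Wehrli, Bloom for odd; for knots, mutation invariance of integral Khovanov homology is known). Component-preserving mutations would need care, but knot mutation is fine. Mutation alone does not give infinitely many distinct knots, however. A better route is to use that $\Kh$ of a connected sum is determined by $\Kh$ of the summands up to a Künneth-type formula. This works only after fixing the choice of the connecting point, and integrally the formula is only known in special forms.

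Instead I would exploit the known construction of infinite families with identical Khovanov homology. One option is Watson's or Kanenobu's knots: Kanenobu knots $K_{p,q}$ with $p+q$ fixed share Khovanov homology, and $K_{0,0}=4_1\#4_1$. Kanenobu's family consists of ribbon knots built from two figure-eight tangles twisted by $p$ and $-q$ full twists, and indeed $K_{0,0}$ is $4_1\#4_1$. It is known that $\Kh(K_{p,q})\cong\Kh(K_{p+q,0})$ up to grading shift determined by $p+q$. This result is due to Watson, who proved $\Kh(K_{p,q})$ depends only on $p+q$, over $\Z$. Taking $q=-p$ gives infinitely many knots $K_{p,-p}$ sharing $\Kh$ with $K$.

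Finally, I would show that infinitely many of the $K_{p,-p}$ are pairwise non-isotopic. Kanenobu distinguished these using the HOMFLY polynomial or the Alexander module, and genus or hyperbolic volume also grows with $|p|$. The main obstacle is verifying that Watson's result holds with $\Z$-coefficients and bigradings exactly matching, not just up to shift. I expect that his skein exact-sequence argument, applied to the twist region, gives isomorphic integral complexes via a Reidemeister-type cancellation, which yields the bigraded integral isomorphism.
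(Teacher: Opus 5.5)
Your proof is correct. Its Khovanov half is the paper's argument: both rest on Watson's theorem for a Kanenobu-type family through $4_1\#4_1$. That theorem already gives a bigraded isomorphism of integral Khovanov homology and pairwise non-isotopy, so your worry about grading shifts and $\mathbb{Z}$-coefficients is settled by the citation itself, and you need not sketch how Watson's skein argument might go.

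The genuine difference is in the Legendrian half. The paper observes that $4_1$ is Legendrian simple with a one-peak mountain range and then quotes An's connected-sum criterion. You instead go back to the Etnyre--Honda connected-sum theorem and do the bookkeeping by hand:
\begin{itemize}
\item every Legendrian $L$ in the knot type splits as $L_1\#L_2$;
\item each $L_i$ is a stabilization of the unique $\tb=-3$ figure-eight $L_0$;
\item sliding stabilizations across the splitting sphere gives $L\simeq S_+^aS_-^b(L_0\#L_0)$, with $(a,b)$ recovered from $(\tb,\rot)$.
\end{itemize}
This route is self-contained and shows exactly why a single peak suffices. It uses only three facts: Legendrian connected sum is well defined, every representative splits as such a sum, and stabilizations slide between summands. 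The permutation relation never enters. An's criterion is the general tool for connected sums of Legendrian simple knots, including multi-peak summands where simplicity of the sum can fail. That generality is what lets the paper dispose of this step in one line.

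Two slips to fix:
\begin{itemize}
\item Since $\tb(L_1\#L_2)=\tb(L_1)+\tb(L_2)+1$, you get $\tb(L)=-5-a-b$, not $-7-a-b$. This is harmless for the argument.
\item The Kanenobu knots with fixed $p+q$ cannot be told apart by the HOMFLY polynomial, since Kanenobu showed it is constant on such a family; that is the point of his construction. Your remark that genus or volume grows with $|p|$ is unsupported. The distinctness should simply be imported from Kanenobu and Watson, as the paper does.
\end{itemize}
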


\begin{proof}
Etnyre and Honda proved that the figure-eight knot is Legendrian simple and
has a unique non-destabilizable Legendrian representative
\cite[Lemma 5.2, Theorem 5.3, Proposition 5.9]{EH01}. Thus, its Legendrian mountain range has one peak. An's
connected-sum criterion~\cite[Theorem~1]{An16} now implies that $K$, 
the connected sum of two figure-eight knots, is Legendrian simple.

On the other hand, the main result of Watson~\cite{Wat07} shows that $K$ shares the same integral Khovanov homology with infinitely many other knots, see Section~7.2 in~\cite{Wat07}. More precisely, Watson constructs an infinite family of pairwise non-isotopic knots $(K_n)_{n\in\N}$ that have isomorphic integral bigraded Khovanov homologies. The first three knots of this family are $K$ (the connected sum of two figure-eight knots), $K8a16$, and $K12n462$.
\end{proof}

\section{A prime conjectural example}

Next, we present a prime knot that is not detected by its Khovanov homology, but is conjectured to be Legendrian simple. The $13$-crossing knot $K13n1836$ belongs to one of Watson's Kanenobu families~\cite{Wat07}. In fact, using the braid word $[-1,2,-1,-1]$ in Watson's construction as in~\cite[Section 7.1]{Wat07} yields a Kanenobu family with identical integral Khovanov homology groups where the first three knots are $K8a4$, $-K10n18$, and $K13n1836$.\footnote{The isomorphism of their integral bigraded Khovanov homologies can also be verified directly from the computations recorded in KnotInfo~\cite{KnotInfo}.}
As such, $K13n1836$ has the same integral bigraded
Khovanov homology as infinitely many pairwise non-isotopic knots. Mirroring the entire family shows that the same is true for the mirror $-K13n1836$.

On the other hand, there is substantial computational evidence that $K13n1836$ and its mirror are both Legendrian simple. Petkova and Schwartz \cite{PetkovaSchwartz}, cf.\ \cite{LSULegendrianAtlas}, compute the maximal Thurston--Bennequin invariant of any Legendrian representative of $K13n1836$ to be $\overline{\tb}(K13n1836)=-4$ and that there are precisely two maximal-$\tb$ Legendrian representatives, represented by the grids \(L_-=G730\) and \(L_+=G731\), with classical invariants
\[
        \tb(L_\pm)=-4,
        \qquad
        \rot(L_\pm)=\pm1.
\]
Their computation in the next horizontal layer (i.e.\ those grids with $\tb=-5$) contains a representative $S777$
with \(\tb(S777)=-5\) and $\rot(S777)=0$ which has both \(L_+\) and \(L_-\) as parents.

This yields the following partial result towards Legendrian simplicity of \(K13n1836\).

\begin{corollary}\hfill
\begin{enumerate}
    \item Any two Legendrian representatives of \(K13n1836\) obtained by stabilizing
\(L_-\) or \(L_+\) are Legendrian isotopic whenever they have the same
Thurston--Bennequin and rotation numbers.
    \item If every Legendrian representative of \(K13n1836\) with
\(\tb<-4\) destabilizes, then \(K13n1836\) is Legendrian simple.
\end{enumerate}
\end{corollary}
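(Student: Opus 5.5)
The plan is to exploit the common destabilization $S777$ together with the standard facts about stabilizations. Write $S_+$ and $S_-$ for positive and negative stabilization. These operations commute up to Legendrian isotopy and are well defined on Legendrian isotopy classes. Each changes $\tb$ by $-1$, and $S_\pm$ changes $\rot$ by $\pm1$.

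The first step is to read off from the Petkova--Schwartz data that $S777$ is a stabilization of both $L_+$ and $L_-$. Since $\rot(L_\pm)=\pm1$ and $\rot(S777)=0$, it must be that $S777\simeq S_-(L_+)\simeq S_+(L_-)$. So the two maximal representatives become Legendrian isotopic after one stabilization.

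For part (1), take any iterated stabilization $L=S_+^aS_-^b(L_\epsilon)$ with $\epsilon\in\{+,-\}$. Its invariants are
\[
\tb(L)=-4-a-b,\qquad \rot(L)=\epsilon 1+a-b.
\]
I would prove that $L$ depends, up to Legendrian isotopy, only on the pair $(\tb,\rot)$.

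The argument splits by the exponents. If $b\ge1$ and $\epsilon=+$, then
\[
L\simeq S_+^aS_-^{b-1}S_-(L_+)\simeq S_+^{a+1}S_-^{b-1}(L_-).
\]
Likewise, if $a\ge1$ and $\epsilon=-$, the representative can be rewritten over $L_+$. Hence every stabilization that is not a pure $S_+^a(L_+)$ or a pure $S_-^b(L_-)$ can be written over $L_-$, by transferring one stabilization through $S777$.

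Over a fixed base $L_-$, the invariants $(\tb,\rot)$ determine $a$ and $b$ uniquely, so the class is determined. The remaining cases are the pure ones, $S_+^a(L_+)$ and $S_-^b(L_-)$. If such a representative shares its invariants with another stabilization, I would check the few possible coincidences directly.

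Two representatives over the same base with equal invariants have equal $(a,b)$, so nothing is needed there. A coincidence between $S_+^a(L_+)$ and a representative $S_+^{a'}S_-^{b'}(L_-)$ forces $a'=a+1$ and $b'=-1$, which is impossible. Symmetrically, $S_-^b(L_-)$ cannot coincide with a representative over $L_+$ with $a'\ge0$ unless $a'=-1$, which is again impossible. Away from these pure cases, one stabilization can always be moved, and the conversion settles the claim.

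For part (2), assume every representative with $\tb<-4$ destabilizes. Then any Legendrian $L$ of $K13n1836$ destabilizes repeatedly until $\tb=-4$, since $\tb\le\overline{\tb}=-4$ and each destabilization raises $\tb$ by one, so the process terminates. The result has maximal $\tb$ and is therefore isotopic to $L_+$ or $L_-$ by the classification of maximal representatives. So every representative is a stabilization of $L_\pm$, and part (1) gives Legendrian simplicity.

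The only real obstacle is the identification $S777\simeq S_-(L_+)\simeq S_+(L_-)$. The term ``parent'' in the Petkova--Schwartz atlas has to mean exactly that $S777$ is a single stabilization of each. The rotation numbers then force the signs. Everything else is bookkeeping with the commutation of stabilizations.
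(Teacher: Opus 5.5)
Your proposal is correct and follows the paper's proof. For (1) you use that $L_+$ and $L_-$ have the common stabilization $S777\simeq S_-(L_+)\simeq S_+(L_-)$, together with the commutation of positive and negative stabilizations; for (2) you destabilize down to $\tb=-4$ and invoke the fact that there are exactly two maximal-$\tb$ representatives. Your explicit index bookkeeping, which the paper leaves implicit, is sound, and the case split could be shortened: equal invariants for $S_+^aS_-^b(L_+)$ and $S_+^{a'}S_-^{b'}(L_-)$ force $a'=a+1$ and $b'=b-1$, so $b\ge 1$ and a single transfer through $S777$ always suffices.
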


\begin{proof}
The representatives \(L_-\) and \(L_+\) have a common stabilization. Hence,
using that positive and negative stabilizations commute, any stabilizations of
\(L_-\) and \(L_+\) with the same classical invariants are Legendrian isotopic.
This proves~(1).

If every representative with \(\tb<-4\) destabilizes, then every Legendrian
representative destabilizes to one of the two maximal-\(tb\) representatives
\(L_-\) or \(L_+\). Thus~(1) implies that \(\tb\) and \(\rot\) determine the
Legendrian isotopy class, proving~(2).
\end{proof}

\begin{rem}
The mirror $-K13n1836$ exhibits exactly the same phenomenon. Petko\-va and Schwartz~\cite{PetkovaSchwartz}, cf.\ \cite{LSULegendrianAtlas}, compute $\overline{\tb}(-K13n1836)=-6$
and find precisely two maximal $\tb$ representatives $G1661$ and $G1662$, with
\begin{equation*}
    \rot(G1661)=-1,
\qquad
\rot(G1662)=1.
\end{equation*}
Moreover, $S1983$ has $\tb=-7$ and $\rot=0$ and has both $G1661$ and $G1662$ as parents. Thus, $G1661$ and $G1662$ have a common stabilization. Consequently, any stabilizations of these two representatives with the same classical invariants are Legendrian isotopic; and if every Legendrian representative of $-K13n1836$ with $\tb<-6$ destabilizes, then $-K13n1836$ is Legendrian simple.
\end{rem}

\begin{rem}
Since the Seifert genus of $K13n1836$ is two and since the maximal Thurston--Bennequin invariants of $K13n1836$ and its mirror is less than $-1$, Theorem~1.2 in~\cite{Etnyre_neighborhood} implies that every nondestabilizable Legendrian representative satisfies
$$
|\rot(L)|\le 3.
$$
\end{rem}

All these points lead to the following conjecture.

\begin{conjecture}
The knot \(K13n1836\) and its mirror \(-K13n1836\) are Legendrian simple.
\end{conjecture}

\subsection*{Acknowledgements}

We thank Naageswaran Manikandan for useful initial discussions and John Etnyre and Ina Petkova for clarifications about~\cite{Etnyre_neighborhood} and~\cite{PetkovaSchwartz}.

Although the final arguments of this article are noncomputational, the surrounding search relied heavily on SnapPy~\cite{SnapPy}, SageMath~\cite{Sage}, GridPyM~\cite{BC24}, GridLink~\cite{GridLink}, KnotJob~\cite{KnotJob}, KLO~\cite{KLO}, KnotInfo~\cite{KnotInfo}, and the Legendrian knot atlases~\cite{CN_Leg_knot_atlas,PetkovaSchwartz,LSULegendrianAtlas}. We acknowledge that this project would not have been possible without these programs and data, and we thank the developers for making these freely available.

\subsection*{Individual grant support}

CSH and AT were supported by the Berlin Mathematical School, the Berlin Mathematics Research Center MATH+ (EXC-2046/2, project ID: 390685689), funded by the Deutsche Forschungsgemeinschaft (DFG) under Germany’s Excellence Strategy.
CSH was supported by the Claussen-Simon-Stiftung.
MK is supported by a Ram\'on y Cajal grant (RYC2023-043251-I) and by the project PID2024-157173NB-I00 funded by MCIN/AEI/10.13\-039/501100011033, ESF+ and FEDER, EU; and by a VII Plan Propio de Investigación y Transferencia (SOL2025-36103) of the University of Sevilla.

\bibliographystyle{myamsalpha}
\bibliography{main}

@misc{knotinfo,
Author = {Livingston, Charles and Moore, Allison H.},
howpublished = {\url{https://knotinfo.org}},
Title = {Knot{I}nfo: Table of Knot Invariants},
Year = {2026},
}

@article{KM11,
 author = {Kronheimer, P. B. and Mrowka, T. S.},
 title = {Khovanov homology is an unknot-detector},
 fjournal = {Publications Math{\'e}matiques},
 journal = {Publ. Math., Inst. Hautes {\'E}tud. Sci.},
 issn = {0073-8301},
 volume = {113},
 pages = {97--208},
 year = {2011},
 language = {English},
 doi = {10.1007/s10240-010-0030-y},
 url = {hdl.handle.net/1721.1/70474},
 zbMATH = {5963659},
 Zbl = {1241.57017}
}

@article{BS22,
 author = {Baldwin, John A. and Sivek, S.},
 title = {Khovanov homology detects the trefoils},
 fjournal = {Duke Mathematical Journal},
 journal = {Duke Math. J.},
 issn = {0012-7094},
 volume = {171},
 number = {4},
 pages = {885--956},
 year = {2022},
 language = {English},
 doi = {10.1215/00127094-2021-0034},
 zbMATH = {7500566},
 Zbl = {1494.57020}
}

@article{BDLLS21,
 author = {Baldwin, John A. and Dowlin, Nathan and Levine, Adam Simon and Lidman, Tye and Sazdanovic, Radmila},
 title = {Khovanov homology detects the figure-eight knot},
 fjournal = {Bulletin of the London Mathematical Society},
 journal = {Bull. Lond. Math. Soc.},
 issn = {0024-6093},
 volume = {53},
 number = {3},
 pages = {871--876},
 year = {2021},
 language = {English},
 doi = {10.1112/blms.12467},
 zbMATH = {7381916},
 Zbl = {1470.57025}
}

@article{BHS25,
 author = {Baldwin, John A. and Hu, Ying and Sivek, Steven},
 title = {Khovanov homology and the cinquefoil},
 fjournal = {Journal of the European Mathematical Society (JEMS)},
 journal = {J. Eur. Math. Soc. (JEMS)},
 issn = {1435-9855},
 volume = {27},
 number = {6},
 pages = {2443--2465},
 year = {2025},
 language = {English},
 doi = {10.4171/JEMS/1415},
 zbMATH = {8030707},
 Zbl = {1576.57004}
}

@article{BS25,
 author = {Baldwin, John A. and Sivek, Steven},
 title = {Floer homology and non-fibered knot detection},
 fjournal = {Forum of Mathematics, Pi},
 journal = {Forum Math. Pi},
 issn = {2050-5086},
 volume = {13},
 pages = {65},
 note = {Id/No e1},
 year = {2025},
 language = {English},
 doi = {10.1017/fmp.2024.28},
 zbMATH = {7979662},
 Zbl = {1570.57011}
}

@article{EH01,
 author = {Etnyre, John B. and Honda, Ko},
 title = {Knots and contact geometry {I}: {Torus} knots and the figure eight knot},
 fjournal = {The Journal of Symplectic Geometry},
 journal = {J. Symplectic Geom.},
 issn = {1527-5256},
 volume = {1},
 number = {1},
 pages = {63--120},
 year = {2001},
 language = {English},
 doi = {10.4310/JSG.2001.v1.n1.a3},
 zbMATH = {1911428},
 Zbl = {1037.57021}
}

@misc{CM26,
      title={Conjectures on the {K}hovanov Homology of Torus Knots, Twist Knots, and {L}egendrian Simple Knots}, 
      author={Vladimir Chernov and Ryan Maguire},
      year={2026},
      arxiv={2205.11430}, 
      howpublished = {Preprint},
}

@misc{Etnyre_neighborhood,
      title={Neighborhoods of transverse knots and destabilizations}, 
      author={John B. Etnyre},
      year={2026},
      arxiv={2512.15651}, 
      howpublished = {Preprint},
}

@article{An16,
 author = {An, Byung Hee},
 title = {A criterion for the {Legendrian} simplicity of the connected sum},
 fjournal = {Topology and its Applications},
 journal = {Topology Appl.},
 issn = {0166-8641},
 volume = {204},
 pages = {175--184},
 year = {2016},
 language = {English},
 doi = {10.1016/j.topol.2016.03.011},
 zbMATH = {6571455},
 Zbl = {1337.57007}
}

@article{Wat07,
 author = {Watson, Liam},
 title = {Knots with identical {Khovanov} homology},
 fjournal = {Algebraic \& Geometric Topology},
 journal = {Algebr. Geom. Topol.},
 issn = {1472-2747},
 volume = {7},
 pages = {1389--1407},
 year = {2007},
 language = {English},
 doi = {10.2140/agt.2007.7.1389},
 zbMATH = {5220917},
 Zbl = {1137.57020}
}

@article{PetkovaSchwartz,
 author = {Petkova, Ina and Schwartz, Noah},
 title = {A {Legendrian} knot atlas for knots of arc index 10},
 fjournal = {Experimental Mathematics},
 journal = {Exp. Math.},
 issn = {1058-6458},
 volume = {35},
 number = {1},
 pages = {111--259},
 year = {2026},
 language = {English},
 doi = {10.1080/10586458.2024.2430715},
 zbMATH = {8176797},
 Zbl = {08176797}
}

@article{CN_Leg_knot_atlas,
 author = {Chongchitmate, Wutichai and Ng, Lenhard},
 title = {An atlas of {Legendrian} knots},
 fjournal = {Experimental Mathematics},
 journal = {Exp. Math.},
 issn = {1058-6458},
 volume = {22},
 number = {1},
 pages = {26--37},
 year = {2013},
 language = {English},
 doi = {10.1080/10586458.2013.750221},
 zbMATH = {6180370},
 Zbl = {1267.57004}
}

@misc {LSULegendrianAtlas,
    AUTHOR = {Bhattacharyya, Nilangshu and Cox, Cyrus and Murray, Justin
              and Pandikkadan, Adithyan and Vela-Vick, Shea and Wu, Angela},
     TITLE = {Legendrian Knot Atlas},
     howpublished = {\url{https://www.math.lsu.edu/~knotatlas/legendrian/}},
}

@misc {SnapPy,
    AUTHOR = {Culler, Marc and Dunfield, Nathan M. and Goerner, Matthias
              and Weeks, Jeffrey R.},
     TITLE = {Snap{P}y, a computer program for studying the geometry
              and topology of 3-manifolds},
    howpublished = {\url{https://snappy.computop.org/}},
}

@misc {Sage,
    AUTHOR = {{The Sage Developers}},
     TITLE = {Sage{M}ath, the Sage Mathematics Software System},
         howpublished = {\url{https://www.sagemath.org/}},
}

@article{BC24,
 author = {Barbensi, Agnese and Celoria, Daniele},
 title = {{GridPyM}: a {Python} module to handle grid diagrams},
 fjournal = {The Journal of Software for Algebra and Geometry},
 journal = {J. Softw. Algebra Geom.},
 issn = {1948-7916},
 volume = {14},
 pages = {31--39},
 year = {2024},
 language = {English},
 doi = {10.2140/jsag.2024.14.31},
 zbMATH = {7878541},
 Zbl = {1544.57002}
}

@misc {GridLink,
    AUTHOR = {Culler, Marc},
     TITLE = {Grid{L}ink, A tool for manipulating rectangular link diagrams},
     howpublished = {\url{https://homepages.math.uic.edu/~culler/gridlink/}},
}

@misc {KnotJob,
    AUTHOR = {Sch{\"u}tz, Dirk},
     TITLE = {Knot{J}ob},
     howpublished = {\url{https://www.maths.dur.ac.uk/users/dirk.schuetz/knotjob.html}},
}

@misc {KLO,
    AUTHOR = {Swenton, Frank},
     TITLE = {{KLO} (Knot-Like Objects)},
     howpublished = {\url{https://community.middlebury.edu/~mathanimations/klo/}},
}

@article{EF,
 author = {Eliashberg, Yakov and Fraser, Maia},
 title = {Topologically trivial {Legendrian} knots},
 fjournal = {The Journal of Symplectic Geometry},
 journal = {J. Symplectic Geom.},
 issn = {1527-5256},
 volume = {7},
 number = {2},
 pages = {77--127},
 year = {2009},
 language = {English},
 doi = {10.4310/JSG.2009.v7.n2.a4},
 zbMATH = {5573151},
 Zbl = {1179.57040}
}
\end{document}